\date{}
\newtheorem{theorem}{\bf Theorem}[section]
\newtheorem{claim}[theorem]{\bf Claim}
\newtheorem{lemma}[theorem]{\bf Lemma}
\newtheorem{conjecture}[theorem]{\bf Conjecture}
\newtheorem{corollary}[theorem]{\bf Corollary}
\title{\vspace{-1.2cm} Ramsey goodness of paths}
\author{Alexey Pokrovskiy\thanks{Department of Mathematics, ETH, 8092 Zurich, Switzerland. {\tt dr.alexey.pokrovskiy@gmail.com}. Research supported in part by SNSF grant 200021-149111.}
\and
Benny Sudakov
\thanks{Department of Mathematics, ETH, 8092 Zurich, Switzerland. {\tt benjamin.sudakov@math.ethz.ch}. Research supported in part by SNSF grant 200021-149111.}
}
\begin{document}
\maketitle

\begin{abstract}
Given a pair of graphs $G$ and $H$, the Ramsey number $R(G,H)$ is the smallest $N$ such that every red-blue coloring of the edges of the complete graph $K_N$ contains a red copy of $G$ or a blue copy of $H$. If graph $G$ is connected, it is well known and easy to show that $R(G,H) \geq (|G|-1)(\chi(H)-1)+\sigma(H)$, where $\chi(H)$ is the chromatic number of $H$ and $\sigma$ the size of the smallest color class in a $\chi(H)$-coloring of $H$. A  graph $G$ is called $H$-good if $R(G,H)= (|G|-1)(\chi(H)-1)+\sigma(H)$. The notion of Ramsey goodness was introduced by Burr and Erd\H{o}s in 1983 and has been extensively studied since then. In this short note we prove that $n$-vertex path $P_n$ is $H$-good for all $n\geq 4|H|$. This proves in a strong form a conjecture of Allen, Brightwell, and Skokan.
\end{abstract}

\maketitle

\section{Introduction}
Given a pair of graphs $G$ and $H$, the Ramsey number $R(G,H)$ is the smallest $N$ such that every red-blue coloring of the edges of the complete graph $K_N$ contains a red copy of $G$ or a blue copy of $H$. It is a corollary of the celebrated theorem of Ramsey that these numbers are always finite. Let $\chi(H)$ be the chromatic number of $H$, i.e. the smallest number of colors needed to color the vertices of $H$ so that no pair of adjacent vertices have the same colour, and $\sigma(H)$ be the the size of the smallest color class in a $\chi(H)$-colouring of $H$.
It was observed by Burr \cite{Bu} that for connected $G$ with $|G|\geq\sigma(H)$ Ramsey numbers always satisfy the following easy lower bound
\begin{equation}\label{RamseyLowerBound}
R(G,H)\geq  (|G|-1)(\chi(H)-1)+\sigma(H).
\end{equation}
To prove (\ref{RamseyLowerBound}), consider a $2$-edge-coloring of the complete graph on  $N=(|G|-1)(\chi(H)-1)+\sigma(H)-1$ vertices consisting of $\chi(H)-1$ disjoint red cliques of size $|G|-1$ as well as one disjoint red clique of size $\sigma(H)-1$. This coloring has no red $G$ because all red connected components have size $\leq |G|-1$, and there is no blue $H$ since the partition of this $H$ induced by red cliques would give a coloring of $H$ by $\chi(H)$ colors with one color class smaller than $\sigma(H)$, contradicting the definition of $\sigma(H)$.

For some graphs the bound in (\ref{RamseyLowerBound}) is quite far from the truth.  For example Erd\H{o}s~\cite{ErdosRamsey} showed that $R(K_n, K_n)\geq \Omega(2^{n/2})$ which is much larger than the quadratic bound we get from (\ref{RamseyLowerBound}). However there are many known pairs of graphs for which $R(G,H)=(|G|-1)(\chi(H)-1)+\sigma(H)$. In this case we say that \emph{$G$ is $H$-good}. The notion of Ramsey goodness was introduced by Burr and Erd\H{o}s \cite{BE} in 1983 and was extensively studied since then, see, e.g., \cite{ABS,Ch, CFLS, FGMSS,Nik, NR} and their references.

In this short note we study the question of when the $n$-vertex path $P_n$ is $H$-good, for some fixed graph $H$. This problem goes back to the work of Erd\H{o}s, Faudree, Rousseau, and Schelp~\cite{EFRSMultipartiteSparse}, who in 1985 proved that there is a function $f$ such that 
$P_n$ is $H$-good for all $n\geq f(|H|)$. The function $f(|H|)$ is not explicit in~\cite{EFRSMultipartiteSparse}, but $f(H)=O(|H|^4)$ can be proved using their method. H\"aggkvist \cite{H} (for $k=2$) and later Pokrovskiy \cite{P} obtained a general upper bound on the Ramsey number of path versus complete $k$-partite graphs, showing that $R(P_n, K_{m, \ldots, m}) \leq (k-1)(n-1)+km-k+1$. Here and later, 
$K_{m_1, \ldots, m_k}$ denotes a complete $k$-partite graph with parts of order $m_1, \ldots, m_k$ together with all the edges connecting vertices in different parts. Although this bound is not strong enough to prove goodness, it holds for all values of the parameters. More recently, Pei and Li~\cite{PeiLi} showed that if $n\geq 8|H|+3\sigma(H)^2+c\chi^8(H)$, then $P_n$ is $H$-good. For general $H$ (e.g., when $H=K_{m,m}$) this result requires $n$ to be quadratic in $|H|$.  Allen, Brightwell, and Skokan~\cite{ABS} conjectured that $P_n$ is $H$-good already when $n$ is linear in $|H|$.
\begin{conjecture}[\cite{ABS}]~\label{ConjectureABS}
Let $H$ be a fixed graph with chromatic number $k$ and let $n\geq k|H|$. Then
$R(P_n,H)=(n-1)(k-1)+\sigma(H)$. 
\end{conjecture}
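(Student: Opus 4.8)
The plan is to establish only the upper bound $R(P_n,H)\le (n-1)(k-1)+\sigma(H)$, since the matching lower bound is precisely inequality~(\ref{RamseyLowerBound}). Accordingly, I fix $N=(n-1)(k-1)+\sigma(H)$, take any red/blue colouring of $K_N$ with no blue $H$, and try to produce a red $P_n$. Let $G$ denote the graph of red edges and suppose, for contradiction, that $G$ contains no path on $n$ vertices. I would analyse the components $C_1,\dots,C_t$ of $G$: every edge joining two different components is blue, so the blue graph already contains the complete multipartite graph with parts $C_1,\dots,C_t$, and inside each $C_i$ the blue edges are exactly the complement of $G[C_i]$. Locating a blue $H$ inside this combined structure is the whole game.

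Two observations organise the search. First, picking a maximum red-independent set $Q_i$ in each component makes $Q_1\cup\dots\cup Q_t$ an entirely blue clique (independence inside each $Q_i$, cross-component edges between them), so a blue $K_{|H|}\supseteq H$ is avoided only if $\sum_i\alpha(C_i)\le |H|-1$; in particular $t\le |H|-1$. Second, using only cross-component edges one can embed $H$ in blue whenever $H$ has a proper colouring whose colour classes pack into bins of capacities $|C_1|,\dots,|C_t|$ (send $H$-adjacent vertices to distinct components); hence no such capacitated colouring may exist. These facts pull against each other: the first caps the total red independence, the second forbids the component sizes from accommodating the colour classes of $H$, and the extremal colouring of~(\ref{RamseyLowerBound})---disjoint red cliques of orders $n-1,\dots,n-1,\sigma(H)-1$---is exactly the configuration meeting both constraints, on $N-1$ vertices.

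The engine I would prove is a path lemma: a connected graph with independence number $a$ and no $P_n$ has at most about $\tfrac12 a(n-1)$ vertices, the extremal example being a spider of $a$ cliques of order roughly $n/2$ glued at one common vertex. I would obtain it through a rotation/extension (P\'osa) argument or a Gallai--Milgram path cover, but the point I want to exploit is the complement: in such a near-extremal component the blue graph restricted to the ``legs'' is itself complete multipartite with parts that are red-independent sets of order about $n/2\ge |H|$, and these legs are blue-complete both to one another and to every other component. Thus a sufficiently large $P_n$-free red component does not merely occupy space---its complement hands us several large, mutually blue-complete parts. Feeding these legs into the packing of the second observation shows that any configuration with a genuinely large component already contains a blue $H$, which is what excludes the ``one dominant component'' scenarios that a crude vertex count cannot rule out.

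The argument then becomes a dichotomy run to the exact count. If every component has order at most about $n-1$, then $\sum_i\alpha(C_i)\le|H|-1$ forces almost all of the $N$ vertices into at most $k-1$ near-clique components of order $\le n-1$; the remaining $\sigma(H)$ vertices must then either push one such component past order $n-1$---producing a red $P_n$ by the path lemma---or supply, together with the small components, enough capacity in $\ge k$ parts to pack all colour classes of $H$, including its smallest class of size $\sigma(H)$, giving a blue $H$. I expect the \emph{main obstacle} to be twofold. The first and harder part is upgrading the path lemma to the complement statement---that a large $P_n$-free component contains $H$ in blue with the constant that makes $n\ge 4|H|$ suffice---since this is where the $n/2\ge |H|$ type inequality, and hence the factor $4$, is forced. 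The second is the stability-style bookkeeping needed to drive the additive error down from the $O(|H|)$ term of the complete-multipartite bound $R(P_n,K_{m,\dots,m})\le(k-1)(n-1)+km-k+1$ all the way to the exact value $\sigma(H)$, by showing that a colouring on $N$ vertices with neither a red $P_n$ nor a blue $H$ would have to be a disjoint union of red cliques of the extremal sizes, and therefore have only $N-1$ vertices.
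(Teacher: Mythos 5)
You have not given a proof; you have given a plan whose two hardest steps you explicitly defer, and those deferred steps are the entire content of the theorem. As it stands, only the two easy observations (the blue clique from $\sum_i\alpha(C_i)$ and the cross-component packing) are actually established, and vertex counting cannot close the argument from there: a $P_n$-free connected red component need not have order near $n-1$ --- your own spider example has order about $\alpha n/2$, and the only global constraint available, $\sum_i \alpha(C_i)\leq |H|-1$, permits total order about $|H|n/2$, which dwarfs $N=(n-1)(k-1)+\sigma(H)$ whenever $|H|\gg \chi(H)$ (e.g.\ $H=K_{m,m}$). So everything rests on the unproven ``complement upgrade,'' and your description of it contains an inversion: the legs of the extremal spider are red cliques, i.e.\ blue-\emph{independent} sets (it is \emph{distinct} legs that are blue-complete to each other), so the blue structure inside a large component is multipartite with the legs as parts, not ``parts that are red-independent sets''; worse, you would need this not for the exact spider but as a stability statement valid for every near-extremal $P_n$-free component, which is nowhere established. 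The second deferred step is equally serious: obtaining the exact additive constant $\sigma(H)$ requires showing that a colouring of $K_N$ avoiding both configurations must be \emph{exactly} the extremal union of red cliques, an exact stability result that ``bookkeeping'' does not supply. A sobering calibration: the paper itself, with a sharper method, proves the conjecture only for $k\geq 4$ (via $n\geq 4|H|$, Corollary~\ref{RamseyPathTheorem}), and for $k\leq 3$ the statement you set out to prove remains open there --- so a completed version of your plan would have to outperform the authors' own result.

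For contrast, the paper never analyses red components and proves no stability. It proves Theorem~\ref{RamseyAtLeastTheorem} by induction on the number of parts $k$: the inductive hypothesis yields an expansion property (Claim~\ref{AtLeastExpansionInduction}: any red set $B$ with $|B|\geq m_1$ satisfies $|N_G(B)\cup B|\geq n-m_2+m_1$, since otherwise deleting $N_G(B)\cup B$ and inducting gives a blue $K_{m_2,\dots,m_k}$ that $B$ completes); after removing a small non-expanding set $A$, P\'osa rotation on a longest red path (Lemma~\ref{LemmaEndpointsNeighbourhoodContained}) produces at least $m_2+2m_1$ ending vertices, hence a set $S'$ of $m_2$ of them, disjoint in neighbourhood from $A$, lying on a single red cycle together with \emph{all} of $N(S')$. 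Either that cycle has length at least $n$, or deleting it leaves enough vertices to embed a blue $K_{m_1,m_3,\dots,m_k}$ by induction, with $S'$ serving as the missing part of size $m_2$ because every edge from $S'$ to the remainder is blue. This ``red cycle absorbing its own neighbourhood, then induct with one fewer part'' move is precisely the idea your sketch lacks: it converts the global packing-plus-stability problem you face into a local alternative and sidesteps exactness entirely.
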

 
Let $R(C_{\geq n}, H)$ be the smallest $N$ such that any $2$-edge-coloring of $K_N$ contains either a red cycle of length at least $n$ or a blue $H$. Notice that we always have $R(P_n, H)\leq R(C_{\geq n}, H)$. Motivated by the above conjecture, in this note we prove the following theorem.

\begin{theorem}\label{RamseyAtLeastTheorem}
Given integers $m_1\leq m_2\leq \dots \leq m_k$ and $n\geq 3m_k+5m_{k-1}$, we have
$$R(C_{\geq n}, K_{m_1, \dots, m_k})= (k-1)(n-1)+m_1.$$
\end{theorem}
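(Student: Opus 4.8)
The plan is to prove the two inequalities separately. The lower bound $R(C_{\geq n},K_{m_1,\dots,m_k})\ge (k-1)(n-1)+m_1$ comes from exactly the construction behind (\ref{RamseyLowerBound}): on $(k-1)(n-1)+m_1-1$ vertices take $k-1$ disjoint red cliques of size $n-1$ together with one red clique of size $m_1-1$, and colour all remaining edges blue. Every red component then has fewer than $n$ vertices, so there is no red cycle of length $\ge n$; and a blue $K_{m_1,\dots,m_k}$ would force each of its $k$ parts into a single red clique, which is impossible since the smallest clique has only $m_1-1<m_1$ vertices. All of the work therefore lies in the upper bound, which I would prove by induction on $k$.

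For the inductive step I would fix a red/blue colouring of $K_N$ with $N=(k-1)(n-1)+m_1$ and no red $C_{\geq n}$, and peel off the largest part. The goal is to find a set $A_k$ of exactly $m_k$ vertices whose closed red neighbourhood $A_k\cup N_{\mathrm{red}}(A_k)$ has at most $n-1$ vertices. Given such an $A_k$, I discard the at most $n-1-m_k$ vertices of $N_{\mathrm{red}}(A_k)\setminus A_k$; the surviving set $V'$ then has $|V'|\ge N-(n-1)=(k-2)(n-1)+m_1$, still contains no red $C_{\geq n}$, and by induction (the hypothesis is inherited, since $3m_{k-1}+5m_{k-2}\le 3m_k+5m_{k-1}\le n$) contains a blue $K_{m_1,\dots,m_{k-1}}$. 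As $A_k$ sends no red edge to $V'$, it is blue-complete to these $k-1$ parts, giving a blue $K_{m_1,\dots,m_k}$. The base case $k=1$ is trivial because $K_{m_1}$ is edgeless. Thus everything reduces to producing the peelable set $A_k$.

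To find $A_k$ I would look at the connected components of the red graph, since vertices in distinct components are joined only by blue edges. If some component $C$ satisfies $m_k\le |C|\le n-1$, any $m_k$-subset of $C$ works, as its whole closed neighbourhood stays in $C$. If every component has fewer than $m_k$ vertices, I greedily take whole components until their union has between $m_k$ and $2m_k-1\le n-1$ vertices and let $A_k$ be an $m_k$-subset of that union; again the closed neighbourhood is confined to the chosen components. The only remaining, and decisive, case is when the red graph has a component $C^*$ with $|C^*|\ge n$.

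This last case is where I expect the real difficulty, and where the hypothesis $n\ge 3m_k+5m_{k-1}$ should enter. Here $C^*$ is connected and has at least $n$ vertices yet no cycle of length $\ge n$, so it cannot be highly connected. I would study it via a longest red path $P=u_0u_1\cdots u_L$ together with P\'osa rotations: the neighbours of an endpoint lie on $P$, and $u_0\sim u_j$ would create a red cycle of length $j+1$, forcing $j\le n-2$. Letting $X$ be the set of endpoints reachable from $u_0$ by rotations (keeping $u_L$ fixed), P\'osa's lemma gives $|N_{\mathrm{red}}(X)|<2|X|$, hence $|X\cup N_{\mathrm{red}}(X)|<3|X|$. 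The dichotomy I would aim for is: either two reachable endpoints can be joined to close a red cycle of length $\ge n$, a contradiction, or $|X|<n/3$, so that $X\cup N_{\mathrm{red}}(X)$ lies in a set of fewer than $n$ vertices; if moreover $|X|\ge m_k$, an $m_k$-subset of $X$ has closed red neighbourhood of size at most $3m_k\le n-1$ and serves as $A_k$. (When the longest path has fewer than $n$ vertices, $C^*$ is path-short, hence tree-like by the Erd\H{o}s--Gallai path bound, and a leaf block or a bundle of low-degree vertices supplies $A_k$ directly.) Making this rotation argument quantitative — guaranteeing enough reachable endpoints with confined neighbourhoods, controlling the windows at \emph{both} ends of the path, and absorbing the resulting error terms — is precisely what the constants $3m_k$ and $5m_{k-1}$ are tuned for, and this analysis is the main obstacle I anticipate.
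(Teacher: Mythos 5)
Your lower bound and your outer induction scheme are sound, and the reduction step (delete a closed red neighbourhood of size at most $n-1$, apply induction to the remaining $(k-2)(n-1)+m_1$ vertices, and join the peeled set blue-completely) matches the paper's in spirit. But there is a genuine gap, and it is exactly the case you defer: a red component of order at least $n$. There you need a rotation-endpoint set $X$ with $m_k\leq |X|< n/3$, yet nothing in your argument bounds $|X|$ from below: if the red graph is a long path, or a star with a pendant path, the set of endpoints reachable by rotations can be a single vertex, P\'osa's lemma then says nothing, and your parenthetical fallback (``tree-like by Erd\H{o}s--Gallai, a leaf block \dots supplies $A_k$'') is not an argument. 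The missing idea---the heart of the paper's proof---is that the lower bound on the number of ending vertices does not come from the red graph alone but from the \emph{blue} side: Claim~\ref{AtLeastExpansionInduction} applies the induction hypothesis to $\Gamma\setminus(N_G(B)\cup B)$ with the \emph{smallest} part removed (i.e.\ to $K_{m_2,\dots,m_k}$) to show that absence of a blue $K_{m_1,\dots,m_k}$ forces $|N_G(B)\cup B|\geq n-m_2+m_1$ for every red set $B$ with $|B|\geq m_1$; after deleting a maximal non-expanding set $A$ (shown to satisfy $|A|<m_1$), every small set expands by a factor greater than $2$, and this is what pushes the endpoint count up to $|S|\geq m_2+2m_1$. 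Note that the paper never needs $|S|\geq m_k$: it peels a set $S'$ of size $m_2$, the second smallest part (the part $m_1$ is absorbed inside the Claim), whereas you insist on peeling the largest part $m_k$ using only the hypothesis ``no red $C_{\geq n}$''. That purely red statement (``no red $C_{\geq n}$ implies some $m_k$-set whose closed red neighbourhood has at most $n-1$ vertices'') cannot be extracted from a single application of P\'osa; even if one proves a version by iterating longest-path/endpoint arguments over residual graphs, the natural loss gives a closed neighbourhood of size about $6m_k$, which would require $n>6m_k$ and so cannot reach the stated bound $n\geq 3m_k+5m_{k-1}$, which can be as small as $3m_k+5$.

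Two further slips in the case you sketch. First, the dichotomy ``either two reachable endpoints close a red cycle of length at least $n$, or $|X|<n/3$'' is asserted, not proved; what is actually needed is part (ii) of Lemma~\ref{LemmaEndpointsNeighbourhoodContained} (the BBDK-type statement): for a \emph{connected} set $X$ of ending vertices there is a single red cycle containing all of $N(X)\cup X$, so that ``no red $C_{\geq n}$'' yields $|N(X)\cup X|\leq n-1$ directly. Your sketch omits both this lemma and the connectivity bookkeeping for $X$ that it requires. Second, your claim that an $m_k$-subset $Y\subseteq X$ has closed red neighbourhood of size at most $3m_k$ is false as stated: one only has $N(Y)\cup Y\subseteq N(X)\cup X$, so the bound is $3|X|$, not $3|Y|$---harmless given your assumption $|X|<n/3$, but symptomatic of the larger problem that the quantitative core of the proof (blue-side expansion guaranteeing $m_2+2m_1$ endpoints, then choosing $S'$ of size $m_2$ avoiding the at most $2m_1$ red neighbours of $A$, so that the deleted set is contained in a red cycle) is absent from the proposal.
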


\noindent
Notice that the vertices of a $k$-chromatic graph  $H$ can be partitioned into $k$ independent sets of sizes $m_1, \dots, m_k$ with  
$\sigma(H)=m_1\leq m_2\leq \dots \leq m_k$.  This is equivalent to $H$ being a subgraph of $K_{m_1, \dots, m_k}$.
Since $4|H|\geq 4m_k+4m_{k-1}\geq 3m_k+5m_{k-1}$, Theorem~\ref{RamseyAtLeastTheorem} implies the following.

\begin{corollary}\label{RamseyPathTheorem}
Let $H$ be a fixed graph with chromatic number $k$ and let $n\geq 4|H|$. Then
$R(P_n,H)=(n-1)(k-1)+\sigma(H)$.
\end{corollary}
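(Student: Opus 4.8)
The Corollary is immediate from Theorem~\ref{RamseyAtLeastTheorem}. Writing $H$ as a subgraph of $K_{m_1,\dots,m_k}$ with $m_1=\sigma(H)\le\dots\le m_k$ as in the remark preceding it, a blue $K_{m_1,\dots,m_k}$ contains a blue $H$, so $R(P_n,H)\le R(C_{\ge n},H)\le R(C_{\ge n},K_{m_1,\dots,m_k})=(k-1)(n-1)+m_1$, while the matching lower bound is~(\ref{RamseyLowerBound}) applied to the connected graph $P_n$; the hypothesis $n\ge 4|H|\ge 3m_k+5m_{k-1}$ feeds the Theorem. Thus all the content lies in Theorem~\ref{RamseyAtLeastTheorem}, which I plan to prove as follows. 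The lower bound is the construction of~(\ref{RamseyLowerBound}): take $k-1$ vertex-disjoint red cliques of order $n-1$ and one of order $m_1-1$; all red components have order at most $n-1$, so there is no red cycle of length $\ge n$, and there is no blue $K_{m_1,\dots,m_k}$ by the argument given after~(\ref{RamseyLowerBound}).

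For the upper bound I would induct on $k$, peeling off the largest part. The base case $k=1$ is trivial, since $K_{m_1}$ is edgeless. For the inductive step, fix a red/blue colouring of $K_N$ with $N=(k-1)(n-1)+m_1$ and no red cycle of length $\ge n$. The reduction I aim for is to produce a set $B$ of exactly $m_k$ vertices whose closed red-neighbourhood $N_R[B]$ has size at most $n-1$. Given such a $B$, put $V'=V\setminus N_R[B]$: every edge between $B$ and $V'$ is blue, and $|V'|\ge N-(n-1)=(k-2)(n-1)+m_1$, while the red graph on $V'$ still has no cycle of length $\ge n$. Since $3m_{k-1}+5m_{k-2}\le 3m_k+5m_{k-1}\le n$, the induction hypothesis applied inside $V'$ with parts $m_1\le\dots\le m_{k-1}$ yields a blue $K_{m_1,\dots,m_{k-1}}$; all of its vertices lie in $V'$ and hence send only blue edges to $B$, so adjoining $B$ as the $k$-th class gives the desired blue $K_{m_1,\dots,m_k}$.

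Everything therefore reduces to the key lemma: in a graph with no cycle of length $\ge n$ on $N\ge n$ vertices one can find $m_k$ vertices with closed neighbourhood of size at most $n-1$. I would organise this around the red components. If every red component has order at most $n-1$, then either some component already has order in $[m_k,n-1]$, or all have order $<m_k$, in which case a greedy union of components reaches the window $[m_k,n-1]$ (each step changes the running total by less than $m_k\le (n-1)-m_k$, using $n-1\ge 2m_k$, so the window cannot be skipped); taking $B$ inside such a union keeps $N_R[B]$ inside it. The remaining, genuinely hard, case is a single red component $C$ with $|C|\ge n$ and no cycle of length $\ge n$ — a long, thin object. Here I would study a longest red path $P$ in $C$ (equivalently a DFS tree): the absence of long cycles forces every chord of $P$ to span at most $n-2$ vertices, and every path-endpoint reachable by P\'osa rotations has all its neighbours on $P$. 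Combining a deepest subtree of order at least $m_k$ with the short-chord constraint should confine the closed neighbourhood of a well-chosen $m_k$-set to a window of consecutive path vertices.

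The main obstacle, and the origin of the precise threshold $n\ge 3m_k+5m_{k-1}$, is to push the size of this window down to at most $n-1$: a naive count in the long-thin case only yields $n+O(m_k)$, and trimming the surplus requires using the colouring more carefully — repositioning $B$ away from high-degree ``hub'' vertices and bounding a separator, which is where the second-largest part $m_{k-1}$ (and the constant $5$) enters. I expect this neighbourhood-confinement estimate to be the one delicate step; the induction, the reduction to the key lemma, and the component bookkeeping are all routine by comparison.
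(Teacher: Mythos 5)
Your first paragraph is exactly the paper's proof of the corollary itself: the reduction via $R(P_n,H)\le R(C_{\ge n},K_{m_1,\dots,m_k})$, the inequality $4|H|\ge 4m_k+4m_{k-1}\ge 3m_k+5m_{k-1}$, and the lower bound from (\ref{RamseyLowerBound}) are all correct and match the paper. But as you rightly say, all the content lies in Theorem~\ref{RamseyAtLeastTheorem}, and there your proposal has a genuine gap at its core. Your ``key lemma'' --- that any graph on $N\ge n$ vertices with no cycle of length $\ge n$ contains $m_k$ vertices whose closed neighbourhood has size at most $n-1$ --- is never proved: you concede that your count in the long-thin case only yields $n+O(m_k)$, and the proposed fixes (``repositioning $B$ away from hubs,'' ``bounding a separator'') are gestures, not arguments. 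Note also an internal tension: you state the lemma as a purely red-graph fact, but then say closing it ``requires using the colouring more carefully,'' which abandons that formulation. Since this confinement estimate is precisely where the theorem's difficulty sits, the proposal is incomplete exactly where it matters.

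The paper's proof avoids this obstacle by two structural choices you invert. First, it never attempts a colour-free confinement lemma: the absence of a blue $K_{m_1,\dots,m_k}$, fed through the induction hypothesis, yields the expansion property (Claim~\ref{AtLeastExpansionInduction}: every set $B$ with $|B|\ge m_1$ has $|N_G(B)\cup B|\ge n-m_2+m_1$), and this expansion is what powers the P\'osa rotation machinery. Second, it peels a \emph{small} part, not the largest: after deleting a maximal non-expanding set $A$ with $|A|<m_1$, the set $S$ of rotation endpoints of a longest red path satisfies $|S|\ge m_2+2m_1$, one picks $S'\subseteq S$ of size $m_2$ avoiding $N(A)$, and Lemma~\ref{LemmaEndpointsNeighbourhoodContained}(ii) places $S'\cup N(S')$ on a single red cycle --- so if there is no red $C_{\ge n}$ then $|S'\cup N(S')|\le n-1$ \emph{automatically}, with no surplus to trim; $S'$ is then joined as the class of size $m_2$ and induction is applied to $K_{m_1,m_3,\dots,m_k}$. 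Your scheme instead demands an endpoint set of size $m_k$ at the threshold $n\ge 3m_k+5m_{k-1}$ (where $m_{k-1}$ may be as small as $1$), which the expansion arithmetic does not supply --- it gives $|S|\ge m_2+2m_1$, a quantity controlled by the \emph{smallest} parts. So beyond the unproven lemma, the decision to peel $m_k$ rather than $m_2$ points the induction in a direction where the numbers do not close at the stated threshold; a naive P\'osa-accumulation version of your colour-free lemma would need roughly $n\ge 6m_k$, strictly weaker than what the theorem asserts.
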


For $k\geq 4$, this corollary proves Conjecture~\ref{ConjectureABS} in a very strong form, showing that the condition $n\geq \chi(H)|H|$ is unnecessary, and $n\geq 4|H|$ suffices.  For $k\leq 3$, our result is slightly weaker than the conjecture, but is a large improvement on the best previously known~\cite{PeiLi} quadratic dependence of $n$ on $|H|$. Moreover, for certain graphs $H$, Theorem~\ref{RamseyAtLeastTheorem} shows that $P_n$ is $H$-good even when $n$ is smaller than $4|H|$. For example if $H$ is balanced (i.e. if $|H|=\sigma(H)\chi(H)$), then this theorem implies that $P_n$ is $H$-good as long as $n\geq 8|H|/\chi(H)$.

\section{Proof of the main theorem}
The proof of Theorem~\ref{RamseyAtLeastTheorem} uses a combination of the P\'osa rotation-extension technique and induction on $k$.
Let $P=p_1 p_2 \dots p_t$ be a path in a graph $G$ which also contains an edge $p_t p_i$. We say that a path 
$Q= p_1 p_2 \dots p_i p_t p_{t-1} \dots p_{i+1}$ is a rotation of $P$. We say that a path $Q$ is \emph{derived} from $P$ if there is a sequence of paths $P_0=P, P_1, \dots, P_s=Q$ with $P_j$ being a rotation of $P_{j-1}$ for each $j$.
We say that a vertex $x$ is an ending vertex for $P$ if it is the final vertex of some path derived from $P$.  
We say that a set  $X$ of ending vertices of $P$ is \emph{connected} if for every $x\in X$ there is a path $P_x$ ending with $x$ and a sequence of paths 
$P=P_0, P_1, \dots, P_s= P_x$ with  $P_j$ being a rotation of $P_{j-1}$ for each $j$ and $P_j$ ending with some $x'\in X$. 
That is, for a $P_x$ ending in $x \in X$, the set $X$ also contains ending vertices of a sequence of intermediate paths whose rotations produce $P_x$.   
Notice that for a path $P$, the set of all ending vertices for $P$ is connected.
For a set of vertices $S$ in a graph $G$, we use $N_G(S)$ to mean the set of vertices  $v\in V(G)\setminus S$ for which there is a vertex $s\in S$ such that $vs$ is an edge of $G$ i.e $N_G(S)$ is the neighbourhood of the set $S$ outside $S$. When there is no ambiguity of what the underlying graph is, we abbreviate this to $N(S)$.

We will need a variant of the celebrated lemma of P\'osa from~\cite{Pos}. 
\begin{lemma}\label{LemmaEndpointsNeighbourhoodContained}
Let $P$ be a path of maximum length in a graph $G$ starting at some vertex $p_0$, $X$ a connected set of ending vertices for $P$, and $S$ the set of all ending vertices for $P$.
Then we have 
\begin{enumerate}[(i)]
\item $|N(X)|\leq 2|S|.$
\item There is a cycle in $G$ containing  $N(X)\cup X$.
\end{enumerate}
\end{lemma}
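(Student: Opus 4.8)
The plan is to prove both parts together by analyzing the structure created by the rotation-extension process applied to a \emph{maximum length} path $P$. The crucial starting observation is that since $P$ has maximum length, it cannot be extended: in particular the starting vertex $p_0$ has all of its neighbours \emph{on} $P$, and more generally every ending vertex $x\in S$ has all its neighbours on $P$ (otherwise we could extend the derived path ending at $x$, contradicting maximality). This confines the entire analysis to the vertex set $V(P)$, and it is the single fact from which both conclusions flow. I would fix the path $P = p_0 p_1 \cdots p_t$ and work throughout with positions along this fixed path.

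\textbf{Part (i).} The key structural fact behind the classical P\'osa lemma is that if $x$ is an ending vertex, then a neighbour of $x$ on $P$ is either an ending vertex itself or lies immediately adjacent (along $P$) to an ending vertex. Concretely I would show that if $p_j$ is a neighbour of some $x\in X$, then one of $p_{j-1}, p_j, p_{j+1}$ is an ending vertex in $S$: performing the rotation that uses the edge from $x$ to $p_j$ produces a new derived path whose endpoint is $p_{j-1}$ or $p_{j+1}$, so these neighbours cannot ``escape'' from being close to $S$. Here the connectedness hypothesis on $X$ is exactly what lets me chain rotations together so that every relevant rotation genuinely starts from a path derived from $P$ and ending in $X$. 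Since each vertex of $S$ can thereby ``account'' for the neighbours $p_{j}$ with $j\in\{s-1,s,s+1\}$ for $s$ the index of that vertex, and each such $s$ covers at most the two off-path positions $s-1, s+1$ plus itself, I get $|N(X)| \le 2|S|$ by a counting argument over positions on $P$ (the factor $2$ coming from the two sides along the path, after removing double-counting).

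\textbf{Part (ii).} For the cycle, I would argue that the connected set of endpoints lets us ``close up'' the path into a cycle through all of $N(X)\cup X$. Pick an ending vertex $x\in X$ realized by a derived path $P_x$, and consider the neighbours of $x$ among the vertices $N(X)$; because each such neighbour $p_j$ sits next to an endpoint reachable by rotations, I can splice the rotation sequences together to route a single cycle that visits every vertex of $N(X)\cup X$. The standard way to package this is: take the longest path $P_x$ and use an edge from its endpoint back into the path to form a cycle, then verify using the position/neighbour analysis from part (i) that this cycle can be chosen to contain all of $N(X)\cup X$ simultaneously. The connectedness of $X$ ensures the intermediate endpoints are available to patch the cycle together without leaving any required vertex out.

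\textbf{Main obstacle.} The delicate point, and where I expect to spend the most care, is Part (ii): guaranteeing a \emph{single} cycle through \emph{all} of $N(X)\cup X$ rather than merely a long cycle or several disjoint pieces. Part (i) is essentially a localized counting statement once the ``neighbour is adjacent to an endpoint'' lemma is in hand, but closing up a globally consistent cycle requires carefully ordering the vertices of $N(X)\cup X$ along $P$ and using the connectedness of $X$ to justify that the rotations needed to reach each endpoint are mutually compatible. The risk is that different endpoints demand incompatible reorderings of the path; the remedy is to exploit that the derived paths all share the fixed segment structure of $P$, so that the rotations only reverse sub-segments and the cyclic closure can be taken along $P$'s natural linear order, inserting each off-path neighbour at the position dictated by its adjacency to a nearby endpoint.
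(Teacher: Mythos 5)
Your proposal follows the paper's general rotation--extension strategy, but both parts contain genuine gaps at exactly the points where the real work happens. In part (i), your key claim --- that rotating the derived path $P_x$ along the edge $xp_j$ produces a new endpoint equal to $p_{j-1}$ or $p_{j+1}$ --- is false as stated. The rotation makes the new endpoint the neighbour of $p_j$ \emph{along the derived path} $P_x$, and since rotations reverse segments, the neighbours of $p_j$ on $P_x$ need not be its neighbours $p_{j\pm 1}$ on the original path $P$. This is precisely the classical subtlety in P\'osa's lemma. The paper's remedy is an inductive claim about the rotation sequence: every edge of a derived path is either an edge of $P$ or has both endpoints in $X\cup X^-\cup X^+$, and it is here (not merely in ``chaining rotations'') that connectedness of $X$ is used --- each intermediate endpoint lies in $X$, so each newly created edge and each broken edge stays close to $X$. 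With this claim one gets the sharp containment $N(X)\subseteq S^-\cup S^+$ (using that $N(X)$ is by definition disjoint from $X$), which immediately gives $|N(X)|\leq 2|S|$. Your weaker claim, even if repaired, only places each neighbour $p_j$ in $S\cup S^-\cup S^+$, which yields $3|S|$; the ``removing double-counting'' step you invoke to recover the factor $2$ is not substantiated and does not follow from your structural statement.

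In part (ii) you correctly identify the difficulty (one cycle through all of $N(X)\cup X$) but supply no working mechanism; ``splicing rotation sequences together'' is not how the paper, or any argument I can see, closes the cycle --- rotations applied to different endpoints produce different paths, and there is no way to merge them into a single cycle. The paper instead uses a \emph{single} derived path: let $p_i$ be the first vertex of $N(X)\cup X$ in the order along $P$; one checks $p_i\in N(X)$ (if $p_i\in X$, its predecessor $p_{i-1}\in N(X)$ would come earlier), picks $x\in X$ adjacent to $p_i$ and a derived path $Q$ ending at $x$, and then applies the same edge-structure claim from part (i): since $X$ avoids $\{p_0,\dots,p_i\}$, every edge of $Q$ on the initial stretch must be an edge of $P$, so $Q$ begins with the segment $p_0p_1\cdots p_i$ intact. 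Deleting $p_0,\dots,p_{i-1}$ from $Q$ and adding the edge $xp_i$ closes a cycle through every vertex of $P$ at position $\geq i$, which by the choice of $p_i$ contains all of $N(X)\cup X$. This ``first vertex of $N(X)\cup X$ plus preserved initial segment'' idea is the missing ingredient in your sketch, and notably it reuses the very same lemma about edges of derived paths whose absence undermines your part (i); filling that one claim would repair both halves of your argument.
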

\begin{proof}
Fix an orientation of $P$.
Let $S^-$ be the set of left neighbours on $P$ of vertices in $S$ and let $S^+$ be the set of right neighbours on $P$ of vertices in $S$. Notice that by maximality of $|P|$,  we have $N(X)\subseteq V(P)$ (indeed otherwise if we have $y\in N(X)\setminus V(P)$, then we can obtain a longer path starting from $p_0$ by choosing a rotation of $P$ ending with a neighbour of $y$, and extending it with $y$.)
First we will show that $N(X)\cup X\subseteq S^-\cup S^+\cup X$. 

Let $x$ be a vertex in $X$ and $P=P_0,P_1 \dots, P_s$ be a sequence of paths such that $P_s$ ends with $x$, $P_i$ ends with some $v\in X$, and $P_i$ is a rotation of $P_{i-1}$ (such a sequence exists since $X$ is a connected set of endpoints.) Notice that if $yz\in E(P_s)$, then either $yz\in E(P)$ or $\{y,z\}\subseteq X\cup X^-\cup X^+$ (this is proved by induction using the fact that from the definition of ``rotation'', we have $yz\in P_i \implies yz\in P_{i-1}$ or $\{y,z\}\subseteq X\cup X^-\cup X^+$.)

Let $p_0, \dots, p_k=x$ be the vertex sequence of $P_s$. For $p_i\in N(x)$, we know $p_1 p_2 \dots p_i p_k p_{k-1} \dots p_{i+1}$ is a rotation of $P$ and hence $p_{i+1}\in S$. Since $p_i p_{i+1}\in E(P_s)$, we have that either $p_ip_{i+1}\in E(P)$ which implies that $p_i\in S^-\cup S^+\subseteq  X\cup S^-\cup S^+$, or we have $\{p_{i}, p_{i+1}\}\subseteq X\cup X^-\cup X^+\subseteq X\cup S^-\cup S^+$. This implies that $N(X)\subseteq\bigcup_{x\in X}N(x)\subseteq X\cup S^-\cup S^+$ as required.
Now part (i)  of the lemma comes from $|N(X)\cup X|\leq |S^-|+|S^+|+|X|\leq 2|S|+|X|$.

Although  part (ii) of the lemma already appeared in \cite{BBDK}, we include its short proof for the sake of completeness. Let $p_0, \dots, p_k$ be the vertex sequence of $P$, and let $p_i$ be the first element of $(N(X)\cup X)\cap\{p_0, \dots, p_k\}$. Since we are considering paths starting from $p_0$, we must have $p_0\not\in X$. If $p_0\in N(X)$, then we have a cycle on $V(P)$ formed by taking a rotation of $P$ ending with $p\in N(p_0)$ and adding the edge  $p_0 p$. Therefore, we can assume that $p_i\neq p_0$.
Notice that since $p_{i-1}\in N(p_i)$ we must have  $p_i\not \in X$ (otherwise $p_{i-1}$ would be an element of $(N(X)\cup X)\cap\{p_0, \dots, p_k\}$ preceding $p_i$) which implies that $p_i\in N(X)$. Let $x\in X$ be a neighbour of $p_i$, and let $Q$  be a rotation of $P$ ending with $x$. Let $p_0 q_1 \dots q_k$ be the vertex sequence of $Q$. Recall that for all $i$ either $q_t q_{t+1}\in E(P)$ or $\{q_t, q_{t+1}\}\subseteq  X^-\cup X^+\cup X$. Since $Q$ starts with $p_0$, and $X\cap \{p_0, \dots, p_{i}\}=\emptyset$, each of the edges $p_0p_1, \dots, p_{i-1}p_{i}$ must be edges of $Q$, and so $Q$ starts with the sequence $p_0p_1\dots p_{i}$. Therefore letting $C$ be the cycle formed from $Q\setminus\{p_0,\dots, p_{i-1}\}$ by adding the edge $x p_i$ gives a cycle containing $X\cup N(X)$.
\end{proof}

We now prove the main result of this note.
\begin{proof}[Proof of Theorem~\ref{RamseyAtLeastTheorem}]
The lower bound of the theorem follows from (\ref{RamseyLowerBound}), so it remains to show that $R(C_{\geq n}, K_{m_1, \dots, m_k})\leq (k-1)(n-1)+m_1.$ The proof is by induction on $k$. In case $k=1$ we need to find either a red copy of $C_{\geq n}$ or a blue $1$-partite graph $K_m^{1}$ on $m$ vertices. Since such a graph contains no edges its copy exist in any $2$-edge-coloring of $K_m$. This implies that $R(H,K_m^1)\leq m$ for any graph~$H$.
 
Now let $k\geq 2$ and suppose that the theorem holds for all $k'<k$. Let $\Gamma$ be a $2$-edge-colored complete graph on $(k-1)(n-1)+m_1$ vertices. Suppose that $\Gamma$ contains no blue $K_{m_1, \dots, m_k}$ and let $G$ be the subgraph spanned by the red edges of $\Gamma$. 
\begin{claim}\label{AtLeastExpansionInduction}
For any set $B$ with $|B| \geq m_1$, we have $|N_G(B)\cup B|\geq n-m_2+m_1$. 
\end{claim}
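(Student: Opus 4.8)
The plan is to argue by contradiction using the induction hypothesis for $k-1$ parts. Suppose the claim fails, so there is a set $B$ with $|B|\geq m_1$ and $|A|\leq n-m_2+m_1-1$, where I write $A:=N_G(B)\cup B$. The key structural observation is that, since $N_G(B)\subseteq A$, there are no red edges between $B$ and $W:=V(\Gamma)\setminus A$; that is, every edge joining a vertex of $B$ to a vertex of $W$ is blue. I would then count: since $\Gamma$ has $(k-1)(n-1)+m_1$ vertices and $|A|\leq n-m_2+m_1-1$, the set $W$ satisfies $|W|\geq (k-1)(n-1)+m_1-(n-m_2+m_1-1)=(k-2)(n-1)+m_2$.

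Next I would restrict the colouring to $W$ and apply the induction hypothesis to the complete $(k-1)$-partite graph with parts $m_2\leq m_3\leq\dots\leq m_k$. The point where everything has to line up is that the induction gives $R(C_{\geq n},K_{m_2,\dots,m_k})\leq (k-2)(n-1)+m_2$, which is \emph{exactly} the lower bound on $|W|$ obtained above; this is precisely why the threshold $n-m_2+m_1$ in the claim is the right one and not something larger. The size hypothesis $n\geq 3m_k+5m_{k-1}$ is inherited verbatim, since the two largest parts of the sub-instance are still $m_k$ and $m_{k-1}$ (and for $k=2$ one simply falls back on the trivial base case $R(\,\cdot\,,K^1_{m_2})\leq m_2$). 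Hence the colouring restricted to $W$ contains either a red $C_{\geq n}$ or a blue $K_{m_2,\dots,m_k}$.

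Finally I would assemble the contradiction, after disposing of the red-cycle alternative. If $W$ contains a red $C_{\geq n}$, then so does $G$ and the conclusion of the whole theorem already holds, so we may assume instead that $W$ contains a blue $K_{m_2,\dots,m_k}$. Choosing any $m_1$ vertices of $B$ (possible since $|B|\geq m_1$) and adjoining them as an extra part then yields a blue $K_{m_1,m_2,\dots,m_k}$: the new part is blue-complete to the rest because all edges between $B$ and $W$ are blue, and the parts inside $W$ already form the required complete multipartite pattern. This contradicts the standing assumption that $\Gamma$ contains no blue $K_{m_1,\dots,m_k}$, which proves the claim. The only genuine subtlety I anticipate is bookkeeping rather than ideas: making sure the vertex count for $W$ matches the induction threshold on the nose, and handling the red-cycle outcome of the induction cleanly by observing that it already settles the theorem.
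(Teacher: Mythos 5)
Your proposal is correct and takes essentially the same route as the paper: both delete $N_G(B)\cup B$, note the remainder has at least $(k-2)(n-1)+m_2$ vertices, apply the induction hypothesis with parts $m_2\leq\dots\leq m_k$, and join $m_1$ vertices of $B$ through the all-blue edges to contradict the absence of a blue $K_{m_1,\dots,m_k}$. Your explicit remarks on inheriting the bound $n\geq 3m_k+5m_{k-1}$ and on the red-cycle outcome settling the theorem are exactly the points the paper leaves implicit, so there is nothing to fix.
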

\begin{proof}
 Suppose that $|N_G(B)\cup B|\leq  n-m_2+m_1-1$. Let $\Gamma'=\Gamma\setminus (N_G(B)\cup B)$. We have $|\Gamma'|=|\Gamma|-|N_G(B)\cup B|\geq (k-2)(n-1)+m_2$. By induction $\Gamma'$ either contains a red $C_{\geq n}$ or a blue $K_{m_2,  \dots, m_{k}}$. In the second case, since $|B|\geq m_{1}$, and all the edges between $B$ and $\Gamma'$ are blue, $B$ can be joined to $K_{m_2,  \dots, m_{k}}$ in order to produce a blue $K_{m_1,  \dots, m_k}$.
\end{proof}

Let $A$ be a maximum size subset of $G$ satisfying $|A|< 2m_1$ and $|N_G(A)|\leq 2|A|$. 
This implies that $|N_G(A)\cup A|\leq 3|A|< 6m_1< n-m_2+m_1$, and so by Claim~\ref{AtLeastExpansionInduction} we have $|A|<m_{1}$.
Let $G'$ be the induced subgraph of $G$ on $V(G)\setminus A$. Note that every subset $X$ with $|X| \leq m_1$ in $G'$ satisfies that
$|N_{G'}(X)| > 2|X|$---indeed otherwise we would have 
$$|N_G(X\cup A)|\leq |N_G(X)\setminus A|+|N_G(A)|=|N_{G'}(X)|+|N_G(A)|\leq 2|X|+2|A|=2|A\cup X|.$$
This would contradict the maximality of $A$.

Let $P$ be a maximum length path in $G'$, and $S$ the set of ending vertices for $P$.
By Lemma~\ref{LemmaEndpointsNeighbourhoodContained} we have $|N_{G'}(S)|\leq 2|S|$ which implies that $|S|\geq m_1$.
Let $X$ be a connected set of exactly $m_1$ ending vertices for $P$. This set can be obtained by repeatedly applying rotations, adding one new ending vertex at a time until we have $m_1$ ending vertices. This guarantees that the resulting set is connected, since we always keep all previous ending vertices.
From Claim~\ref{AtLeastExpansionInduction} we obtain 
$$|N_{G'}(X)\cup X| \geq  n-m_2+m_1-|A|\geq 2m_2+5m_1.$$%
Since $|X|=m_1$ and $|N_{G'}(X)\cup X|=|N_{G'}(X)|+|X|$, the above is equivalent to $|N_{G'}(X)|\geq 2m_2+4m_1$.  Combining this with Lemma~\ref{LemmaEndpointsNeighbourhoodContained} gives
$$|S|\geq \frac{1}{2}|N_{G'}(X)|\geq m_2+2m_1.$$
Since $A$ has at most $2|A|\leq 2m_1$ neighbours in $G$, we can choose $S'\subseteq S$ with $|S'|=m_2$ such that there are no edges between $S'$ and $A$ in $G$. Note that then $N_{G'}(S')= N_G(S')$.

By Lemma~\ref{LemmaEndpointsNeighbourhoodContained}, there is a cycle $C$ in $G'$ containing $S'\cup N_{G'}(S')=S'\cup N_G(S')$. If $|C|\geq n$, then the complete graph $\Gamma$ contains a red cycle of length at least $n$. Otherwise, if $|C|\leq n-1$, then let $\Gamma'=\Gamma\setminus (S'\cup N_G(S'))$. We have $|\Gamma'|\geq |\Gamma|-|C|\geq (k-2)(n-1)+m_1$. Therefore, by induction, $\Gamma'$ contains a blue $K_{m_1, m_3, \dots, m_{k}}$. All the edges between $S'$ and $\Gamma'$ are blue, and so $S'$ can be joined to $K_{m_1, m_3, \dots, m_{k}}$ in order to produce a blue $K_{m_1,  \dots, m_k}$.
\end{proof}

\section{Concluding remarks}
An interesting open question is to determine all $n$ for which $P_n$ is $H$-good. 
It is possible to show that
$P_n$ is not $K_{m_1, m_2}$-good when $n \leq 2m_2-2$. 
To see this, let $n=2m_2-2$, $N=n+m_1-1$ and consider the edge coloring of the complete graph $K_N$, consisting of two blue cliques of orders $m_2+m_1-1$ and $m_2-2$ respectively with all the edges between them red. It is easy to see that this coloring contains no red path $P_n$ and no blue 
$K_{m_1, m_2}$.
This implies that constant ``$4$'' in Corollary \ref{RamseyPathTheorem} cannot be made less than $2$.

Our main result gives either a red cycle of length at least $n$ or a blue copy of $H$.
One could ask whether under the same conditions, we can guarantee a cycle of length \emph{exactly} $n$. Indeed, this was asked by Allen, Brightwell, and Skokan~\cite{ABS}, who conjectured that 
$R(C_n, H)=(n-1)(\chi(H)-1)+\sigma(H)$ when $n\geq \chi(H)|H|$. In a forthcoming paper \cite{PS}, we will prove a strengthening of this conjecture for large $\chi(H)$ and $\sigma(H)$. Our result shows that $C_n$ is $H$-good when
$n \geq C |H|$ for some constant $C$ and $\sigma(H) \geq \chi(H)^{11}$.

We also like to mention a 
related problem of Erd\H{o}s, Faudree, Rousseau, and Schelp \cite{EFRSCycleComplete}, who conjectured  that $R(C_n, K_m)=(n-1)(m-1)+1$ 
for $n\geq m$. The best known bound for this conjecture is due to Nikiforov \cite{Nik}, who proved it when $n\geq 4m+1$.

Since paths are special case of trees, it is natural to consider the Ramsey goodness problem for trees as well. For example an old result of Chv\'atal \cite{Ch} says that any tree $T$ is good for every complete graph. Motivated by question of
Erd\H{o}s, Faudree, Rousseau and Schelp, in the recent paper \cite{BPS} we study the Ramsey goodness of bounded degree trees with respect to general graphs.

\vspace{0.15cm}
\noindent
{\bf Acknowledgment.}\, 
Part of this work was done when the second author visited Freie University Berlin.
He would like to thank Humboldt Foundation for a generous support during this visit and 
Freie University for its hospitality and stimulating research environment. Both authors would like to thank J. Skokan, M. Stein and D. Hefetz 
for helpful discussions, as well as D. Korandi for finding a mistake in an earlier draft of this note.

\end{document}